\numberwithin{equation}{section}
\renewcommand{\epsilon}{\varepsilon}
\newcommand{\be}{\begin{equation}}
\newcommand{\ee}{\end{equation}}
\newcommand{\no}{\nonumber}
\newcommand{\N}{\mathbb{N}}
\newcommand{\Q}{\mathbb{Q}}
\newcommand{\R}{\mathbb{R}}
\newcommand{\T}{\mathbb{T}}
\newcommand{\Z}{\mathbb{Z}}
\newcommand{\cE}{{\mathcal E}}
\newcommand{\cF}{{\mathcal F}}
\newcommand{\supp}{{\ensuremath{\mathrm{supp}}}}
\renewcommand{\det}{\mathop{\mathrm{det}}}
\newtheorem{theorem}{Theorem}[section]
\newtheorem{lemma}[theorem]{Lemma}
\newtheorem{corollary}[theorem]{Corollary}
\newtheorem{remark}[theorem]{Remark}
\begin{document}

\title[Existence of isolated band in a system of three
particles in an optical lattice ]{Existence of an isolated band in a
system of three particles in an optical lattice }

\author{Gianfausto Dell'Antonio, Saidakhmat N. Lakaev,
  Ahmad M. Khalkhuzhaev}



\begin{abstract}
We prove the existence of two-and three-particle bound states of the
Schr\"{o}dinger operators $h_\mu(k),k\in \T^d$ and $H_\mu(K),K\in
\T^d$ associated to Hamiltonians $\mathrm{h}_{\mu}$ and
$\mathrm{H}_{\mu}$ of a system of two and three identical bosons on
the lattice $\Z^d, d=1,2$ interacting via pairwise zero-range
attractive $\mu<0$ or repulsive $\mu>0$ potentials. As a consequence
we show the existence of an isolated band in the two and three
bosonic systems in an optical lattice.
\end{abstract}

\maketitle

Subject Classification: {Primary: 81Q10, Secondary: 35P20,47N50}

Keywords: {Discrete Schr\"{o}dinger operator, three-particle system,
hamiltonian, zero-range interaction, bound states, eigenvalue,
essential spectrum, lattice.}
\section{Introduction}
Throughout physics, stable composite objects are usually formed by
way of attractive forces, which allow the constituents to lower
their energy by binding together. Repulsive forces separate
particles in free space. However, in structured environment such as
a periodic potential and in the absence of dissipation, stable
composite objects can exist even for repulsive interactions that
arises from the lattice band structure \cite{Nature}.

The Bose-Hubburd model which is used to describe the repulsive pairs
is the theoretical basis for applications. The work \cite{Nature}
exemplifies the important correspondence between the Bose-Hubburd
model \cite{ Bloch,Jaksch_Zoller} and atoms in optical lattices, and
helps pave the way for many more interesting developments and
applications.

The main goal of the paper is to prove the existence of
three-bosonic bound states of the Schr\"{o}dinger operator
$H_\mu(K),K\in \T^d$ for the cases of attractive $\mu<0$ and
repulsive $\mu>0$ interactions.

Cold atoms loaded in an optical lattice provide a realization of a
quantum lattice gas. The periodicity of the potential gives rise to
a band structure for the dynamics of the atoms.

The dynamics of the ultracold atoms loaded in the lower or upper
band is well described by the Bose-Hubburd hamiltonian
\cite{Nature}; we give in section 2 \ the corresponding
Schr\"{o}dinger operator.

In the continuum case due to rotational invariance the Hamiltonian
separates in a free hamiltonian for the center of mass  and in a
hamiltonian $H_{\mathrm{rel}} $ for the relative motion. Bound
states are eigenstates of $H_{\mathrm{rel}}$.

The kinematics of the quantum particles on the lattice is rather
exotic. The discrete laplacian \it is not rotationally invariant \rm
and therefore one cannot separate the motion of the center of mass.

One can rather resort to a Floquet-Bloch decomposition. The
three-particle Hilbert space $ \mathcal{H} \equiv \ell ^2 [({
\Z}^d)^3] $ is represented as direct  integral associated to the
representation of the discrete group $ {\Z} ^d $ by shift operators
\begin{equation*}
\ell ^2[({\Z}^d)^3] = \int_{K\in {\T}^d} \oplus \ell
^2[({\Z}^d)^2]\eta(dK),
\end{equation*}
where $\eta(dp)=\frac {d^dp}{(2\pi)^d}$ is the (normalized) Haar
measure on the torus $\T^d$. Hence the total three-body hamiltonian
appears to be decomposable
\begin{equation*}\label{decompose}
\mathrm{H}=\int\limits_{\T^d}\oplus H(K)\eta(dK).
\end{equation*}

The fiber hamiltonian $H(K) $ depends perimetrically on the {\it
quasi momentum} $ K \in \T^d \equiv \R^d / (2 \pi {\Z}^d) .$ It is
the sum of a free part and an interaction term, both bounded and the
dependence on $K$ of the free part is continuous.

Bound states $\psi_{E,K}$ are solution of the Schr\"{o}dinger
equation
\begin{equation*}
H(K) \psi_{E,K} = E \psi_{E,K},  \,\, \psi_{E,K} \in \ell
^2[(\Z^d)^2].
\end{equation*}
It is known that in dimension $d=3$ for the case we are considering
the hamiltonian $H(K),K\in \T^3$ for $K=0\in \T^3$ has infinitely
many bound states (the Efimov effect)\cite{ALKh12,LSN93}. Since, the
operator $H(K),K\in \T^3$ continuously depends on $K\in \T^3$ one
can conclude that there exists a neighborhood $
\mathbb{G}_0\subset\T^3$ of $0\in \T^3$ and for all $K\in
\mathbb{G}_0$ the operator $H(K),K\in \mathbb{G}_0$ has bound states
\cite{ALzM04,LSN93}.

In the paper we study the hamiltonian $\mathrm{H}_{\mu}$ of a system
of three bosons on the lattice $ {\Z}^d , \, d=1,2 $ interacting
through attractive $\mu<0$ or repulsive $\mu>0$ zero-range potential
$\mu V$.

Remark that the $d=1,2$ situation is of interest, since in the
experiment it corresponds to a low depth of the lattice along one
direction, whilst the lattice in the perpendicular directions
remains very deep \cite{Nature}.

Our main new results is to prove the existence of three-bosonic
bound states of the Scar\"o\-din\-ger operator $H_\mu(K)$, $K\in
\T^d$ associated to $\mathrm{H}_{\mu}$, with energy lying below the
bottom resp. above the top of the essential spectrum for the case of
attractive $\mu<0$ resp. repulsive $\mu>0$ interaction.

As a consequence we show the existence of a band spectrum of the
hamiltonian $\mathrm{H}_{\mu}$ of a system of three bosons.

We can conclude that these results for a three bosonic system
theoretically predicts the existence stable attractively and
repulsively bound objects of three  atoms. Hopefully, this also can
be experimentally confirmed as it done for atoms pair with repulsive
interaction in \cite{Nature}.

To our knowledge analogous results have not been published yet even
for a system of three particles interacting via attractive
potentials on Euclidean space $\R^d$.

We remark that the same formalism could be used to prove the
existence of at least one bound state with energy lying below resp.
above the essential spectrum for the case of attractive resp.
repulsive short range potentials.

This paper is organized as follows.

Section 1 is introduction. In section 2 we give explicitly the
hamiltonian of the two-body and three-body case in the
Schr\"{o}dinger representation. It corresponds to the Hubburd
hamiltonian in the number of particles representation. In section 3
we introduce the Floquet-Bloch decomposition (von Neumann
decomposition) and choose relative coordinates to describe
explicitly the discrete Schr\"{o}dinger operator $H_\mu(K) , \; K
\in \T^d $. In section 4 we state our main results. In section 5 we
introduce \it channel operators \rm and describe the essential
spectrum of $ H_\mu(K),\;K\in \T^d $ by means discrete spectrum of $
h_\mu(k), \; k \in \T^d $.We prove the existence of bound states in
section 6.

\section{Hamiltonians of three identical
bosons on lattices in the coordinate and momentum representations}
Let ${\Z}^{d},d=1,2$ be the $d$-dimensional lattice. Let
$\ell^2[({\Z}^{d})^{m}],d=1,2$ be Hilbert space of square-summate
functions $\,\,\hat{\varphi}$ defined on the Cartesian power
$({\Z}^{d})^{m},d=1,2$ and let $\ell^{2,s}[(\Z^{d})^{m}]\subset
\ell^{2}[(\Z^{d})^{m}]$ be the subspace of functions symmetric with
respect to the permutation of coordinates of the particles.

Let $\Delta$ be the lattice Laplacian, i.e., the operator which
describes the transport of a particle from one site to another site:
$$ (\Delta\hat{\psi})(x)=-\sum_{\mid s\mid =1} [
\hat{\psi}(x)-\hat{\psi}(x+s)],\quad \hat{\psi}\in\ell^2({\Z}^d).$$

 The free hamiltonian $\hat{\mathrm{h}}_0$ of a system of two
identical quantum mechanical particles with mass $m=1$ on the
$d$-dimensional lattice $\Z^d,d=1,2$ in the coordinate
representation is associated to the self-adjoint operator
$\hat{\mathrm{h}}_0$ in the Hilbert space
$\,\,\ell^{2,s}[(\Z^{d})^{2}]$:
\begin{equation}\no
\hat{\mathrm{h}}_0=\Delta\otimes I + I\otimes\Delta.
\end{equation}

 The total hamiltonian $\hat {\mathrm{h}}_\mu$ of a system of two quantum-mechanical
 identical particles  with the two-particle pairwise zero-range attractive interaction
$\mu\hat v$ is a bounded perturbation of the free hamiltonian
$\hat{\mathrm{h}}_0$ on the Hilbert space $\ell^{2,s}[( {\Z}^d)^2]$:
\begin{equation*}\label{two-part}
\hat{\mathrm{h}}_\mu =\hat{\mathrm{h}}_0+\mu\hat v.
\end{equation*}
Here $\mu\in\R$ is coupling constant and
\begin{equation*}
(\hat v\hat \psi)(x_1,x_2) = \delta _{x_1 x_2}
{\hat\psi}(x_1,x_2),\quad {\hat\psi} \in \ell^{2,s}[({\Z}^d)^2],
\end{equation*}
where $\delta _{x_1 x_2}$ is  the Frolicker delta.

Analogously, the free hamiltonian $\hat{\mathrm{H}}_0$ of a system
of three identical bosons with mass $m=1$ on the $d$-dimensional
lattice $\Z^d$ is defined on $\ell^{2,s}[({\Z}^d)^3]$:
\begin{equation*}\label{free0}
\widehat H_0=\Delta\otimes I\otimes I + I \otimes \Delta \otimes I +
I\otimes I\otimes \Delta.
\end{equation*}
The total  hamiltonian $ \hat{\mathrm{H}}_\mu $  of a system of
three quantum-mechanical identical particles with  pairwise
zero-range interaction $\hat v=\hat v_{\alpha}=\hat
v_{\beta\gamma},\alpha,\beta,\gamma=1,2,3$ is a bounded perturbation
of the free hamiltonian $\hat {\mathrm{H}}_0$:
\begin{equation*}\label{total}
 \hat {\mathrm{H}}_\mu=\hat {\mathrm{H}}_0+\mu\hat {\mathbb{V}},
\end{equation*}
where $\hat{\mathbb{V}}=\sum\limits_{i=1}^{3} \hat
{V}_{\alpha},\,{V}_{\alpha}=\hat V, \alpha=1,2,3 $ is the
multiplication operator on $\ell^{2,s}[({\Z}^d)^3]$ defined by
\begin{align*}
&(\hat V_{\alpha}\hat\psi)(x_1,x_2,x_3)=  \delta _{x_\beta
x_\gamma}\hat\psi(x_1,x_2,x_3),\\
&\alpha\prec\beta\prec\gamma\prec\alpha,\,\alpha,\beta,\gamma=1,2,3,\,\,
\hat\psi\in \ell^{2,s}[({\Z}^d)^3].\\
\end{align*}
\subsection{The momentum  representation}
 Let ${\T}^d=(-\pi,\pi]^d$ be the $d-$ dimensional torus and
$L^{2,s}[(\mathbb{T}^{d})^{m}]\subset L^2[(\mathbb{T}^{d})^{m}]$ be
the subspace of symmetric functions defined on the  Cartesian power
${({\T}^d)^m},\,m\in \N$.

Let  \begin{equation*}
 \hat \Delta={\cF}\Delta{\cF}^{*}
 \end{equation*} be Fourier transform of the Laplacian $\Delta$, where
\begin{equation*} \label{eq-0.9}
\cF: \ \ell^2(\Z^d) \to L^2(\T^d), \quad [\cF(f)](p) \ := \ \sum_{x
\in \Z^d} e^{-\mathrm{i} (p, x)} \:  f(x)
\end{equation*}
is the standard Fourier transform with the inverse
\begin{equation*} \label{eq-0.10}
\cF^*: \ L^2(\T^d) \to \ell^2(\Z^d), \quad [\cF^*(\psi)](x) \ := \
\int_{ \T^d} e^{\mathrm{i}(p, x)} \: \psi(p) \, \eta(dp),
\end{equation*}
and $\eta(dp) =\frac {d^dp}{(2\pi)^d}$ is the (normalized) Haar
measure on the torus.  It easily checked that $\hat \Delta$ is the
multiplication operator by the function $ \varepsilon(\cdot)$, i.e.
\begin{equation*}
(\hat \Delta f)(k)= {\varepsilon}(p)f(p),\quad f \in L^2(\T^d),
\end{equation*}
where
\begin{equation*}
\varepsilon(p)=2\sum_{i=1}^{d}(1-\cos p^{(i)}),\quad p=(p^{(1)},...,
p^{(d)})\in \T^d.
\end{equation*}
The two-particle total hamiltonian  $\mathrm{h}_\mu$ in the momentum
representation is given on $L^{2,s}[({\T}^d)^2]$ as follows
\begin{equation*}
\mathrm{h}_\mu = \mathrm{h}_0+ \mu v.
 \end{equation*}
Here the free hamiltonian $\mathrm{h}_0$ is of the form
\begin{equation*}
\mathrm{h}_0 =\hat \Delta \otimes I+I \otimes \hat \Delta,
\end{equation*}
where $I$ is identity operator on $L^2(\T^d)$ and $\otimes$ denotes
the tensor product. It is easy to see that the operator
$\mathrm{h}_0$ is the multiplication operator by the function $
\varepsilon (k_1)+\varepsilon (k_2):$
\begin{equation*}
(\mathrm{h}_0f)(k_1 ,k_2)=[\varepsilon(k_1)+\varepsilon (k_2 )]f(k_1
,k_2),\,\,f\in L^{2,s}[({\T}^d)^2],
\end{equation*}
 The  integral operator $v$ is the convolution type
\begin{align*}
&( vf)(k_1,k_2 )= {\int\limits_{({\T}^d)^2} }
 \delta (k_1
+k_2 -k_1'-k_2')f(k_1',k_2')\eta(dk_1')\eta(dk_2')\\
&=\int\limits_{\T^d} f(k_1',k_1 +k_2-k_1')\eta(dk_1'),\, f\in
L^{2,s}[({\T}^d)^2],
\end{align*}
where $ \delta (\cdot)$ is the $d-$ dimensional Dirac delta
function.

The three-particle hamiltonian in the momentum representation is
given by the bounded self-adjoint operator on the Hilbert space
 $L^{2,s}[({\T}^d)^3]$ as follows
\begin{equation*}
\mathrm{H}_\mu= \mathrm{H}_0+\mu({V}_{1}+V_{2}+V_{3}),
\end{equation*}
where  $\mathrm{H}_0$ is of the form
\begin{equation*}
\mathrm{H}_0=\hat \Delta\otimes I\otimes I+I \otimes
 \hat \Delta \otimes I+I\otimes I\otimes  \hat \Delta,
\end{equation*}
i.e., the free hamiltonian $\mathrm{H}_0$ is the multiplication
operator by the function $\sum\limits_{\alpha=1}^{3} \varepsilon
(k_\alpha)$:
\begin{equation*}
(\mathrm{H}_0f)(k_1,k_2,k_3)  =
[\sum\limits_{\alpha=1}^{3}\varepsilon (k_\alpha)]f(k_1,k_2,k_3),
\end{equation*}
and

\begin{align*}
&( V_{\alpha}f)(k_{\alpha},k_{\beta},k_{\gamma})\\
&={\int\limits_{({\T}^d)^3} } \delta (k_{\alpha} -k_{\alpha}')\,
\delta (k_{\beta} +k_{\gamma}
-k_{\alpha}'-k_{\beta}')f(k'_1,k'_2,k'_3)\eta(dk_{\alpha}')\eta(dk_{\beta}')\eta(dk_{\gamma}')\\
&={\int\limits_{{\T}^d}}f(k_{\alpha},k_{\beta}',k_{\beta}
+k_{\gamma}-k_{\beta}')\eta(dk_{\beta}'), \quad f\in
L^{2,s}[({\T}^d)^3].
\end{align*}

\section{Decomposition of the hamiltonians into von Neumann direct integrals.
Quasi-momentum and coordinate systems}

Denote by $k=k_1+k_2\in \T^d$ resp. $K=k_1+k_2+k_3\in \T^d$ the {\it
two-} resp. {\it three-particle quasi-momentum} and define the set
$\mathbb{Q}_{k}$ resp. $\mathbb{Q}_{K}$ as follows
\begin{align*}
&\mathbb{Q}_{k}=\{(k_1,k-k_1){\in }({\T}^d)^2: k_1 \in\T^d,
k-k_1\in\T^d\}
\end{align*}
resp.
\begin{align*}
&\mathbb{Q}_{K}=\{(k_1,k_2,K-k_1-k_2){\in }({\T}^d)^3: k_1,k_2
\in\T^d, K-k_1-k_2\in\T^d\}.
\end{align*}
We introduce the mapping
\begin{equation*}
\pi_{1}:(\T^d)^2\to \T^d,\quad \pi_1(k_1, k_2)=k_1
\end{equation*}
resp.
\begin{equation*}
\pi_2:(\T^d)^3\to (\T^d)^2,\quad \pi_2(k_1, k_2, k_3)=(k_1, k_2).
\end{equation*}

Denote by  $\pi_k$, $k\in \T^d$ resp. $\pi_{K}$ , $K\in \T^d$ the
restriction of $\pi_1$ resp. $\pi_2$ onto $\Q_{k}\subset (\T^d)^2,$
resp. $\Q_{K}\subset (\T^d)^3$, that is,
\begin{equation*}\label{project}\pi_{k}= \pi_1\vert_{\Q_{k}}\quad
\text{and}\quad \pi_{K}=\pi_{2}\vert_{\Q_{K}}.
\end{equation*}
At this point it is useful to remark that $ \Q_{k},\,\, k \in {\T}^d
$ resp.
 $ \Q_{K},\,\, K \in
{\T}^d $ are $d-$ resp. $2d-$ dimensional manifold isomorphic to
${\T}^d$ resp.
 ${({\T}^d)^2}$.
\begin{lemma}\label{unitary}
The map $\pi_{k}$, $k\in \T^d$ resp. $\pi_{K}$, $K\in \T^d$ is
bijective from $\mathbb{Q}_{k}\subset (\T^d)^2$ resp.
$\mathbb{Q}_{K}\subset (\T^3)^3$ onto  $\T^d$ resp. $(\T^d)^2$ with
the inverse map given by
\begin{equation*}
(\pi_{k})^{-1}(k_1)=(k_1,k-k_1)
\end{equation*}
resp.
\begin{equation*}
(\pi_{K})^{-1}(k_1,k_2)= (k_1, k_2, K-k_1-k_2).
\end{equation*}
\end{lemma}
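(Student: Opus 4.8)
The plan is to verify directly that the stated inverse maps are well-defined and serve as two-sided inverses for $\pi_k$ and $\pi_K$, which is enough to establish bijectivity. I would handle the two-particle case first and then note that the three-particle case is entirely analogous.

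First I would observe that the candidate map $(\pi_k)^{-1}(k_1)=(k_1,k-k_1)$ is well-defined as a map $\T^d\to\Q_k$: for any $k_1\in\T^d$, the torus is a group, so $k-k_1\in\T^d$, and hence the pair $(k_1,k-k_1)$ satisfies the defining constraint of $\Q_k$. Next I would check the two composition identities. For any $k_1\in\T^d$ we have $\pi_k\bigl((k_1,k-k_1)\bigr)=\pi_1(k_1,k-k_1)=k_1$, so the composite $\pi_k\circ(\pi_k)^{-1}$ is the identity on $\T^d$. Conversely, for any point $(k_1,k_2)\in\Q_k$ the defining relation forces $k_1+k_2=k$, hence $k_2=k-k_1$; applying $(\pi_k)^{-1}$ to $\pi_k(k_1,k_2)=k_1$ returns $(k_1,k-k_1)=(k_1,k_2)$, so the other composite is the identity on $\Q_k$. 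Having both composites equal to the respective identities shows $\pi_k$ is a bijection with the claimed inverse.

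For the three-particle case I would argue the same way with the group relation $k_1+k_2+k_3=K$. The candidate $(\pi_K)^{-1}(k_1,k_2)=(k_1,k_2,K-k_1-k_2)$ lands in $\Q_K$ because $K-k_1-k_2\in\T^d$, and the two compositions reduce, respectively, to the observation $\pi_K(k_1,k_2,K-k_1-k_2)=(k_1,k_2)$ and to the fact that a point of $\Q_K$ has its third coordinate determined by $k_3=K-k_1-k_2$. Thus $\pi_K$ is bijective with the stated inverse.

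There is essentially no serious obstacle here: the only point requiring a word of care is the well-definedness of the inverse maps, namely that $k-k_1$ (resp. $K-k_1-k_2$) automatically lies in $\T^d$. This is immediate once one uses that $\T^d=\R^d/(2\pi\Z^d)$ is an abelian group under addition, so subtraction is defined everywhere and the membership constraints in the definitions of $\Q_k$ and $\Q_K$ are satisfied for every choice of the free coordinates. The remainder is a routine check that the two compositions are identities, which is what I have sketched above.
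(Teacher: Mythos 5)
Your proof is correct: the direct verification that $(k_1)\mapsto(k_1,k-k_1)$ (resp.\ $(k_1,k_2)\mapsto(k_1,k_2,K-k_1-k_2)$) is well-defined into $\mathbb{Q}_{k}$ (resp.\ $\mathbb{Q}_{K}$) because $\T^d=\R^d/(2\pi\Z^d)$ is a group, together with the check that both composites are identities, is exactly the routine argument the paper relies on (it states the lemma without proof, treating it as evident). Nothing is missing.
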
 Let $L^{2,e}({\T}^d)\subset L^2 ( {\T}^d)$ be the
subspace of even functions. Decomposing the Hilbert space $
L^{2,s}[(\T^d)^2]$ resp. $ L^{2,s}[({\T}^d)^3]$  into the direct
integral
\begin{equation*}\label{tensor}
L^{2,s}[({\T}^d)^2] = \int_{k\in {\T}^d} \oplus
L^{2,e}({\T}^d)\eta(dk)
\end{equation*}
resp.
\begin{equation*}
L^{2,s}[({\T}^d)^3] = \int_{K\in {\T}^d} \oplus
L^{2,s}[({\T}^d)^2]\eta(dK)
\end{equation*}
yields the  decomposition of the hamiltonian $\mathrm{h}_\mu$ resp.
$\mathrm{H}_\mu$ into the direct integral
\begin{equation}\label{fiber2}
\mathrm{h}_\mu= \int\limits_{k \in \T^d}\oplus\tilde
h_\mu(k)\eta(dk)
\end{equation}
resp.
\begin{equation}\label{fiber3}
\mathrm{H}_\mu=\int\limits_ {K \in {\T}^d}\oplus \tilde
H_\mu(K)\eta(dK).
 \end{equation}
\subsection{The discrete Schr\"{o}dinger operators} The fiber operator $\tilde h_\mu(k),$
$k \in {\T}^d$ from the direct integral decomposition \eqref{fiber2}
acting in $L^2[\mathbb{Q}_{k}]$ according to Lemma \ref{unitary} is
unitarily equivalent to the operator $h_\mu(k),$ $k \in {\T}^d$
given by
\begin{equation}\label{two} h_\mu(k) =h_0(k)+\mu v.
\end{equation}
The operator $h_0(k)$ is multiplication operator by the function
$\cE_k(p)$:
\begin{equation*}(h_0(k)f)(p)=\cE_k(p)f(p),\quad f \in L^{2,e}(\T^d),
\end{equation*}
where
\begin{equation*}\label{E-alpha}
\cE_{k}(p)= \varepsilon (k-p) +\varepsilon
 (p)
\end{equation*}
and
\begin{equation*}
(vf)(p)= \int\limits_{{\T}^d}f(q)d \eta (q), \quad f \in
L^{2,e}({\T}^d).
\end{equation*}

The fiber operator $\tilde H_\mu(K),$ \,$K \in {\T}^d$ from the
direct integral decomposition \eqref{fiber3} acting in
$L^2[\mathbb{Q}_{K}]$ according to Lemma \ref{unitary} is unitarily
equivalent to the operator $H_\mu (K),$ $K \in {\T}^d$ given by
\begin{equation*}\label{three-particle}
H_\mu(K)=H_0(K)+\mu (V_1+V_2+ V_3).
\end{equation*}

The operator $H_0(K)$ acts in the Hilbert space
$L^{2,s}[({\T}^d)^2]$ and has form
\begin{equation*}\label{TotalK}
(H_0(K)f)(p,q)=E(K;p,q)f(p,q),\quad f\in  L^{2,s}[({\T}^d )^2],
\end{equation*}
where
\begin{equation*} E(K;p,q)= \varepsilon
(K-p-q)+\varepsilon(q) + \varepsilon(p).
\end{equation*}
The  operator $\mathbb{V}=V_1+V_2+V_3$ acting on $
L^{2,s}[({\T}^d)^2]$ in coordinates $(p,q)\in (\T^d)^2$ can be
written in the form
\begin{align*}\label{potential}
(\mathbb{V}f)(p,q)&=\int\limits_{\T^d}f(p,t)
\eta(dt)+\int\limits_{\T^d}f(t,q)\eta(dt)+\int\limits_{\T^d}f(t,K-p-q)\eta(dt).
\end{align*}
\section{Statement of the main results}

According to the Weyl theorem \cite{RSIV} the essential  spectrum
   $\sigma_{\mathrm{ess}}(h_\mu(k))$ of the
operator $h_\mu(k),\,k \in \T^d$ coincides with the spectrum $
{\sigma}( h_0 (k) ) $ of $h_0(k).$ More specifically,
$$
\sigma_{\mathrm{ess}}(h_\mu(k))= [\cE_{\min}(k) ,\,\cE_{\max}(k)],
$$
where \begin{align*} &\cE_{\min}(k)\equiv\min_{p\in
\T^d}\cE_k(p)=2\sum_{i=1}^{d}[1-\cos(\frac{k^{(i)}}{2})]\\
&\cE_{\max}(k)\equiv\max_{p\in\T^d}
\cE_{k}(p)=2\sum_{i=1}^{d}[1+\cos(\frac{k^{(i)}}{2})].
\end{align*}

Since for each $K\in\T^d$ the function $E(K;p,q)$ is continuous on
$(\T^d)^2,\,d=1,2$ there exist
\begin{align*}
{E}_{\min }(K)\equiv\min_{p,q\in \T^d}E(K;p,q),\quad {E}_{\max
}(K)\equiv\max_{p,q \in \T^d}E(K;p,q)
\end{align*}
and $\sigma(H_0(K))= [E_{\min }(K) ,\,E_{\min }(K)]$.

\begin{remark}
We remark that the essential spectrum
$[\cE_{\min}(k),\cE_{\max}(k)]$ strongly depends on the
quasi-momentum $k\in\T^d;$ when $k=\vec{\pi}=(\pi,...,\pi)\in \T^d$
the essential spectrum of $h_{\mu}(k)$ degenerated to the set
consisting of a unique point $\{\cE_{\min}(\vec{\pi})=
\cE_{\max}(\vec{\pi}) =2d\}$ and hence the essential spectrum of
$h_{\mu}(k)$ is not absolutely continuous for all $k\in \T^d.$ The
similar arguments are true for the essential spectrum of $H_0(K)$.
\end{remark}

The following theorem asserts the existence of eigenvalues of the
operator $h_\mu(k)$ and can be proven in the same way as Theorem
\ref{existencetwo} in \cite{LKhL12,LU12}.
\begin{theorem}\label{existencetwo}
 For any $\mu<0$ resp. $\mu>0$  and $k\in \T^d,\,d=1,2$\, the
operator $h_\mu(k)$ has a unique eigenvalue $e_{\mu}(k)$, which is
even in $k\in \T^d$ and satisfies the
relations:
$$e_{\mu}(k)<\cE_{\min}(k),\,k\in\T^d \,\, \mbox{and} \,\,
e_{\mu}(0)<e_{\mu}(k), k\in\T^d\setminus\{0\}
 \,\, \mbox{for}\,\, \mu<0$$
 resp.
$$e_{\mu}(k)>\cE_{\max}(k),\,k\in\T^d\,\, \mbox{and}\,\,
e_{\mu}(0)>e_{\mu}(k), k\in \T^d\setminus\{0\} \,\, \mbox{for}\,\,
\mu>0.$$ The eigenvalue $e_{\mu}(k)$ is holomorphic function in
$k\in\T^d$. For any $k\in\T^d$ the associated eigenfunction
$f_{\mu,e_{\mu}(k)}(p)$ is holomorphic in $p\in{\T}^d$ and has the
form
\begin{equation*}\label{eigen}
f_{\mu,e_{\mu}(k)}(\cdot)=\frac{\mu
c(k)}{\cE_{k}(\cdot)-e_{\mu}(k)},\mu<0, \,\,\mbox{resp.}\,\,
f_{\mu,e_{\mu}(k)}(\cdot)=\frac{\mu
c(k)}{e_{\mu}(k)-\cE_{k}(\cdot)},\mu>0
\end{equation*}
where $c(k)\neq 0$ is  a normalizing constant. Moreover, the vector
valued mapping
\begin{equation*} f_{\mu}:\mathbb{\T}^d \rightarrow
L^2[\mathbb{\T}^d,\eta(dk);L^{2}({\T}^d)],\,k\rightarrow
f_{\mu,e_{\mu}},\mu<0
\end{equation*}
resp.
\begin{equation*} f_{\mu}:\mathbb{\T}^d \rightarrow
L^2[\mathbb{\T}^d,\eta(dk);L^{2}({\T}^d)],\,k\rightarrow
f_{\mu,e_{\mu}},\mu>0
\end{equation*}
is holomorphic on $\mathbb{\T}^d$.
\end{theorem}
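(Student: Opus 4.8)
The plan is to exploit that the interaction $v$ is rank one: writing $\mathbf 1$ for the constant function and $\langle\cdot,\cdot\rangle$ for the scalar product of $L^{2,e}(\T^d)$ with weight $\eta$, we have $vf=\langle\mathbf 1,f\rangle\,\mathbf 1$. For $z\notin[\cE_{\min}(k),\cE_{\max}(k)]$ the operator $h_0(k)-z$ is boundedly invertible, so $(h_\mu(k)-z)f=0$ is equivalent to
\begin{equation*}
f=-\mu\,\langle\mathbf 1,f\rangle\,(h_0(k)-z)^{-1}\mathbf 1,\qquad\text{i.e.}\qquad f(p)=-\frac{\mu\,\langle\mathbf 1,f\rangle}{\cE_k(p)-z}.
\end{equation*}
Pairing with $\mathbf 1$ shows that a nonzero $f$ exists if and only if
\begin{equation*}
\cD_\mu(k,z):=1+\mu\int_{\T^d}\frac{\eta(dp)}{\cE_k(p)-z}=0,
\end{equation*}
and then $f$ is, up to a scalar, the function in the statement. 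Thus the eigenvalues of $h_\mu(k)$ off the essential spectrum are exactly the zeros of $\cD_\mu(k,\cdot)$, with at most one in each spectral gap since $v$ has rank one.

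Next I would settle existence and uniqueness through the monotonicity and boundary behaviour of $I(k,z):=\int_{\T^d}\eta(dp)/(\cE_k(p)-z)$. On each component of $\R\setminus[\cE_{\min}(k),\cE_{\max}(k)]$ one has $\partial_zI(k,z)=\int_{\T^d}(\cE_k(p)-z)^{-2}\,\eta(dp)>0$, so $\cD_\mu(k,\cdot)$ is strictly monotone there, which already gives uniqueness. For $\mu<0$ and $z<\cE_{\min}(k)$ the integrand is positive, $I(k,z)\to0$ as $z\to-\infty$, while $I(k,z)\to+\infty$ as $z\uparrow\cE_{\min}(k)$; this divergence is the decisive use of $d=1,2$, since near its minimum $\cE_k$ grows quadratically in the non-flat directions and $\int d^dp/|p|^2$ diverges precisely for $d\le2$. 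Hence $\cD_\mu(k,\cdot)=1+\mu I(k,\cdot)$ decreases from $1$ to $-\infty$ and has a unique zero $e_\mu(k)<\cE_{\min}(k)$, whereas for $z>\cE_{\max}(k)$ one has $\mu I(k,z)>0$ and no zero (for $k=\vec\pi$, where the band collapses to a point, $h_0(k)$ is a multiple of the identity and the zero is found directly). The case $\mu>0$ is symmetric, giving a unique $e_\mu(k)>\cE_{\max}(k)$. Evenness is immediate from $\cE_{-k}(p)=\cE_k(-p)$ (as $\varepsilon$ is even) together with $p\mapsto-p$, which give $\cD_\mu(-k,z)=\cD_\mu(k,z)$ and hence $e_\mu(-k)=e_\mu(k)$.

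The main obstacle is the extremality $e_\mu(0)<e_\mu(k)$ for $k\neq0$ (for $\mu<0$). I would prove it by comparing at the fixed parameter $z_0:=e_\mu(0)$. Since $\cE_{\min}(k)\ge\cE_{\min}(0)=0>z_0$, the number $I(k,z_0)$ is finite for every $k$; because $I(k,\cdot)$ is strictly increasing and $I(k,e_\mu(k))=-1/\mu=I(0,z_0)$, it suffices to show $I(k,z_0)<I(0,z_0)$ for $k\neq0$. After the substitution $p^{(i)}\mapsto p^{(i)}+k^{(i)}/2$ one computes $\cE_k(p)=4\sum_i\bigl(1-a_i\cos p^{(i)}\bigr)$ with $a_i=\cos(k^{(i)}/2)\in[0,1]$, and, $\eta$ being translation invariant,
\begin{equation*}
I(k,z_0)=\int_{\T^d}\frac{\eta(dp)}{4\sum_i\bigl(1-a_i\cos p^{(i)}\bigr)-z_0}.
\end{equation*}
Differentiating in $a_i$ and carrying out the $p^{(i)}$-integral with the elementary identity $\int_{-\pi}^{\pi}\cos t\,(b-\beta\cos t)^{-2}\,dt=2\pi\beta\,(b^2-\beta^2)^{-3/2}$, valid for $b>\beta\ge0$ (here $b>4\ge\beta=4a_i$ because $z_0<0$), gives $\partial I/\partial a_i>0$ whenever $a_i>0$. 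Therefore $I(\cdot,z_0)$ is strictly maximized at $a=(1,\dots,1)$, that is at $k=0$, which is the required strict inequality. The case $\mu>0$ is handled identically at $z_0>\cE_{\max}(0)$, yielding the reverse inequality and $e_\mu(0)>e_\mu(k)$.

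Finally, $(k,z)\mapsto\cD_\mu(k,z)$ extends to a holomorphic function on a complex neighbourhood of $\{(k,e_\mu(k)):k\in\T^d\}$, since $\cE_k(p)$ is a trigonometric polynomial (entire in $k$) and the denominator stays away from $0$. As $\partial_z\cD_\mu(k,e_\mu(k))=\mu\,\partial_zI(k,e_\mu(k))\neq0$ by strict monotonicity, the analytic implicit function theorem makes $e_\mu$ holomorphic in $k$. The eigenfunction is the one obtained in the first step; holomorphy of $k\mapsto f_{\mu,e_\mu(k)}$ as an $L^2(\T^d)$-valued map follows from the explicit formula, whose denominator $\cE_k(\cdot)-e_\mu(k)$ is bounded away from zero uniformly in $p$, so that the map is a convergent holomorphic $L^2$-valued expression; rescaling by the normalizing constant $c(k)$ completes the proof.
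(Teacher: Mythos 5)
Your proposal is correct and takes essentially the same approach as the paper: the determinant $\cD_\mu(k,z)=1+\mu\int_{\T^d}\eta(dp)/(\cE_k(p)-z)$ you construct is exactly the Fredholm determinant $\Delta_\mu$ on which the paper (and the references \cite{LKhL12,LU12}, to which it defers the proof of this theorem) bases the argument, with eigenvalues characterized as zeros of the determinant and existence/uniqueness following from strict monotonicity in $z$ together with the divergence of the integral at the band edge, which is precisely where $d=1,2$ enters. Your explicit verification of the extremality $e_\mu(0)<e_\mu(k)$ via $\partial I/\partial a_i>0$ after the substitution $a_i=\cos(k^{(i)}/2)$ fills in a detail the paper leaves entirely to the cited references, but it is the standard argument used there as well.
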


In the next theorem the essential spectrum of the three-particle
operator $H_\mu(K),\,K \in \T^d$ is described by the spectra of the
non perturbed operator $H_0(K)$ and the discrete spectrum of the
two-particle operator $h_\mu(k),\,k \in \T^d.$

\begin{theorem}\label{ess} Let $d=1,2.$ For
any $\mu\ne0$ the essential spectrum $\sigma_{\mathrm{ess}}(H_\mu
(K))$ of $H_\mu(K)$ satisfies the following equality
$$
\sigma_{\mathrm{ess}}(H_\mu (K))=\cup _{k\in
{\T}^d}\{e_\mu(K-k)+\varepsilon (k)\} \cup [E_{\min }(K) ,\,E_{\min
}(K)],
$$
where $e_\mu(k)$ is the unique eigenvalue of the operator
$h_\mu(k),k\in \T^d$.
\end{theorem}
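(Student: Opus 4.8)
The plan is to reduce $\sigma_{\mathrm{ess}}(H_\mu(K))$ to the spectra of the three \emph{channel operators}
\begin{equation*}
H^{(\alpha)}_\mu(K) := H_0(K) + \mu V_\alpha, \qquad \alpha = 1,2,3,
\end{equation*}
in which only the pair with spectator $\alpha$ interacts. Since the three bosons are identical the operators $H^{(\alpha)}_\mu(K)$ are unitarily equivalent and share a common spectrum, so it suffices to treat $\alpha=1$. I would first establish the Hunziker--van Winter--Zhislin type identity
\begin{equation*}
\sigma_{\mathrm{ess}}(H_\mu(K)) = \bigcup_{\alpha=1}^{3}\sigma\big(H^{(\alpha)}_\mu(K)\big) = \sigma\big(H^{(1)}_\mu(K)\big),
\end{equation*}
and then compute the right-hand side explicitly.

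For the explicit computation I would use that, in the coordinates $(p,q)$ with $p=k_1$ the spectator momentum, the operator $H^{(1)}_\mu(K)$ is diagonal in $p$. Fixing $p$ and writing $\varepsilon(K-p-q)+\varepsilon(q)=\cE_{K-p}(q)$, while $\mu V_1$ restricts to $\mu v$ on the fibre, gives
\begin{equation*}
H^{(1)}_\mu(K)\cong \int\limits_{\T^d}\oplus\big[\varepsilon(p)\,I + h_\mu(K-p)\big]\,\eta(dp).
\end{equation*}
The spectrum of such a continuous direct integral over the compact torus is $\bigcup_{p\in\T^d}\sigma\big(\varepsilon(p)I+h_\mu(K-p)\big)$, with no closure needed. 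By Theorem \ref{existencetwo} each fibre contributes the isolated point $e_\mu(K-p)+\varepsilon(p)$ together with the band $\varepsilon(p)+[\cE_{\min}(K-p),\cE_{\max}(K-p)]$. The points sweep out the compact set $\bigcup_{k}\{e_\mu(K-k)+\varepsilon(k)\}$, and the bands fill exactly $\{E(K;p,q):p,q\in\T^d\}=[E_{\min}(K),E_{\max}(K)]=\sigma(H_0(K))$, which is the asserted right-hand side.

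The substantive content is the HVZ identity. For the inclusion $\supseteq$ I would build Weyl (singular) sequences for $H_\mu(K)$: a fibre quasi-eigenfunction of $H^{(1)}_\mu(K)$ localized near a momentum $p_0$, multiplied by a spatial translation of the spectator, is an approximate eigenfunction of the full operator once one checks that the discarded terms $\mu(V_2+V_3)$ act negligibly — in the momentum representation they carry an integration against a rapidly oscillating factor and vanish by Riemann--Lebesgue. The reverse inclusion $\subseteq$, asserting that the genuinely three-particle (connected) dynamics produces no new essential spectrum, is where I expect the main obstacle.

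Here I would exploit the decisive structural feature of the zero-range interaction: $v$ is the rank-one operator $f\mapsto\big(\int_{\T^d}f\,\eta\big)\cdot 1$, so each $V_\alpha$ is rank one in its pair variable and the identity in the spectator variable. Writing, for $z\notin\bigcup_\alpha\sigma(H^{(\alpha)}_\mu(K))$, the resolvent of $H_\mu(K)$ through the channel resolvents and the associated Birman--Schwinger kernels, the connected contributions are compositions $V_\beta\,(H^{(\alpha)}_\mu(K)-z)^{-1}\,V_\gamma$ through distinct channels $\beta\neq\gamma$; because the two rank-one structures live in different momentum variables, such a composition involves a genuine integration that smooths the kernel, and in dimensions $d=1,2$ (no Efimov effect) it is Hilbert--Schmidt, hence compact. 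Granting this compactness, a Fredholm and analytic-continuation argument shows $H_\mu(K)-z$ is boundedly invertible for all such $z$, which yields $\subseteq$. Proving this compactness is the technical heart of the argument.
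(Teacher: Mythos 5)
Your proposal is correct and follows essentially the route the paper intends: the paper itself only sets up the single channel operator $H_{\mu,ch}(K)$, its direct-integral fibering \eqref{representation}, and the fiber spectrum \eqref{stucture}, and then cites Theorem 3.2 of \cite{ALKh12,ALzM04} for the HVZ-type identity --- which is exactly the Weyl-sequence plus Faddeev/compactness argument you outline (and your computation also yields the evidently intended $[E_{\min}(K),E_{\max}(K)]$ in place of the theorem's typographical $[E_{\min}(K),E_{\min}(K)]$). No substantive divergence to report.
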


The main result of the paper is given in following theorem, which
states the existence of  bound states of the three-particle operator
$H_\mu(K),\,K \in \T^d$.
\begin{theorem}\label{existencethree} Let $d=1,2$. \item[(i)]
For all $\mu<0$ and $K \in \T^d$  the operator $H_\mu(K)$ has an
eigenvalue  $E_\mu(K)$ lying below the bottom
$\tau^{b}_{\mathrm{ess}}(H_\mu (K))$ of the essential spectrum. The
eigenvalue $E_\mu(K)$ is a holomorphic function in $K\in \T^d$.
Moreover, the associated eigenfunction (bound state)
$f_{\mu,E_\mu(K)}(\cdot,\cdot)\in L^{2,s}[({\T}^d)^2]$ is
holomorphic in $(p,q)\in({\T}^d)^2$ and the vector valued mapping
\begin{equation*}\label{map}
f_{\mu}:\mathbb{\T}^d \rightarrow
L^2[\mathbb{\T}^d,\eta(dK);L^{2,s}[({\T}^d)^2]],\,k\rightarrow
f_{\mu,E_\mu(K)}
\end{equation*} is also holomorphic in $K\in\mathbb{\T}^d$.
\item[(ii)]
For all $\mu>0$ and $K \in \T^d$  the operator $H_\mu(K)$ has an
eigenvalue  $E_\mu(K)$ lying above the top
$\tau^{t}_{\mathrm{ess}}(H_\mu (K))$ of the essential spectrum. The
eigenvalue $E_\mu(K)$ is a holomorphic function in $K\in \T^d$ and
the associated eigenfunction (bound state)
$f_{\mu,E_\mu(K)}(\cdot,\cdot)\in L^{2,s}[({\T}^d)^2]$ is
holomorphic in $(p,q)\in({\T}^d)^2$.  Moreover, the vector valued
mapping
\begin{equation*}\label{map}
f_{\mu}:\mathbb{\T}^d \rightarrow
L^2[\mathbb{\T}^d,\eta(dK);L^{2,s}[({\T}^d)^2]],\,k\rightarrow
f_{\mu,E_\mu(K)}
\end{equation*} is also holomorphic in $K\in\mathbb{\T}^d$.
\item[(iii)] For all $\mu<0$ resp. $\mu>0$ and $K \in \T^d$  the operator $H_\mu(K)$ has
no eigenvalue  lying above the top resp. below the bottom of the
essential spectrum.
\end{theorem}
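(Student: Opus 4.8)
The plan is to convert the eigenvalue equation into a scalar integral equation of Faddeev type and then locate its solution by a Birman--Schwinger argument combined with the analytic Fredholm theorem. For $z<\tau^b_{\mathrm{ess}}(H_\mu(K))\le E_{\min}(K)$ the point $z$ lies outside $\sigma(H_0(K))$, so $H_\mu(K)\psi=z\psi$ is equivalent to $\psi=-\mu(H_0(K)-z)^{-1}\mathbb{V}\psi$. Since each summand of $\mathbb{V}$ integrates out one variable, the bosonic symmetry of $\psi$ gives $(\mathbb{V}\psi)(p,q)=g(p)+g(q)+g(K-p-q)$ with the single channel function $g(p)=\int_{\T^d}\psi(p,t)\eta(dt)$, and hence
\[ \psi(p,q)=-\mu\,\frac{g(p)+g(q)+g(K-p-q)}{E(K;p,q)-z}. \]
Integrating in $q$ produces a closed equation for $g\in L^2(\T^d)$. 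The decisive point is the factorization $E(K;p,q)=\varepsilon(p)+\cE_{K-p}(q)$, which identifies the diagonal coefficient
\[ D(p):=1+\mu\int_{\T^d}\frac{\eta(dq)}{E(K;p,q)-z} \]
with the two-body Fredholm determinant of $h_\mu(K-p)$ evaluated at energy $z-\varepsilon(p)$; by Theorem \ref{existencetwo} it vanishes exactly on the threshold set $\{z=e_\mu(K-p)+\varepsilon(p)\}$ and is strictly positive for every $p$ when $z<\tau^b_{\mathrm{ess}}$. Dividing by $D(p)$ turns the equation into $g=\mathbf{A}_\mu(K,z)g$ with
\[ (\mathbf{A}_\mu(K,z)g)(p)=\frac{-\mu}{D(p)}\int_{\T^d}\frac{g(q)+g(K-p-q)}{E(K;p,q)-z}\,\eta(dq), \]
whose kernel is bounded for each fixed $z<\tau^b_{\mathrm{ess}}$, so $\mathbf{A}_\mu(K,z)$ is Hilbert--Schmidt and, after the substitution $g\mapsto\sqrt{D}\,g$, self-adjoint.

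For (i) I would run a monotonicity/Birman--Schwinger argument: $z$ is an eigenvalue of $H_\mu(K)$ iff $1$ is an eigenvalue of the symmetrized operator $\widehat{\mathbf{A}}_\mu(K,z)$. For $\mu<0$ this operator is positive, its top eigenvalue $\lambda(z)$ is continuous and strictly increasing in $z$, and $\lambda(z)\to 0$ as $z\to-\infty$. The heart of the matter is to show $\lambda(z)>1$ as $z\uparrow\tau^b_{\mathrm{ess}}$: at a minimizer $p_0$ of $e_\mu(K-p)+\varepsilon(p)$ the weight $1/D(p_0)$ blows up, and evaluating the quadratic form of $\widehat{\mathbf{A}}_\mu(K,z)$ on states concentrated near $p_0$ forces it to diverge. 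This estimate is the main obstacle, and it is precisely where $d=1,2$ is used: the threshold integral $\int_{\T^d}(E(K;p,q)-z)^{-1}\eta(dq)$ is non-integrable in dimension $\le 2$, so the blow-up of $1/D$ is not compensated by the $L^2$-normalization. With $\lambda$ running continuously and monotonically from $0$ to a value exceeding $1$, there is a unique $z=E_\mu(K)<\tau^b_{\mathrm{ess}}$ with $\lambda(z)=1$, and the associated $g$ reconstructs the bound state $f_{\mu,E_\mu(K)}$ through the displayed formula for $\psi$.

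The holomorphy assertions follow from the analytic Fredholm theorem. Because $\varepsilon$, $E$ and, by Theorem \ref{existencetwo}, $e_\mu$ are holomorphic and the denominators stay bounded away from $0$, the Fredholm determinant $\mathcal{D}(K,z)=\det\big(I-\widehat{\mathbf{A}}_\mu(K,z)\big)$ extends holomorphically to a complex neighbourhood of $\{(K,E_\mu(K))\}$; strict monotonicity of $\lambda$ gives $\partial_z\mathcal{D}\neq0$, so the zero is simple and the analytic implicit function theorem makes $E_\mu(K)$ holomorphic in $K$. The normalized solution $g$ of $(I-\widehat{\mathbf{A}}_\mu)g=0$ depends holomorphically on $(K,z)$ via the corresponding Riesz projection, and the explicit formula for $\psi$, whose denominator does not vanish for $z<\tau^b_{\mathrm{ess}}$, then yields holomorphy of $f_{\mu,E_\mu(K)}$ in $(p,q)$ and of the vector-valued map $K\mapsto f_{\mu,E_\mu(K)}$.

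For (ii) I would reduce to (i) by the staggering unitary $\mathcal{U}=U^{\otimes3}$, $(U\phi)(x)=(-1)^{x^{(1)}+\cdots+x^{(d)}}\phi(x)$, which shifts each one-particle momentum by $\vec{\pi}$; since it commutes with $\mathbb{V}$ and, using $\varepsilon(p+\vec{\pi})=4d-\varepsilon(p)$, sends $H_0$ to $12d-H_0$, it gives $\mathcal{U}H_\mu(K)\mathcal{U}^{*}=12d-H_{-\mu}(K+\vec{\pi})$. Hence eigenvalues of $H_\mu(K)$ above $\tau^t_{\mathrm{ess}}$ correspond bijectively, with all holomorphy preserved, to eigenvalues of $H_{-\mu}(K+\vec{\pi})$ below $\tau^b_{\mathrm{ess}}$, which were produced in (i). Finally (iii) is immediate from operator monotonicity: for $\mu<0$ one has $H_\mu(K)=H_0(K)-|\mu|\mathbb{V}\le H_0(K)$, so $\sup\sigma(H_\mu(K))\le E_{\max}(K)\le\tau^t_{\mathrm{ess}}(H_\mu(K))$ and no eigenvalue lies above the top; the case $\mu>0$, with $H_\mu(K)\ge H_0(K)$, is symmetric.
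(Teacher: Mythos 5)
Your argument for part (i) is essentially the paper's own proof: the same reduction to the channel function $\varphi(p)=\int f(p,t)\,\eta(dt)$, the same identification of the diagonal coefficient $D(p)$ with the two-body determinant $\Delta_\mu(K,p\,;z)$, the same symmetrization by $\Delta_\mu^{\pm1/2}$ (your $\widehat{\mathbf A}_\mu(K,z)$ is the paper's $\mathrm L_\mu(K,z)$ up to a sign convention), the same Birman--Schwinger/intermediate-value location of $E_\mu(K)$, and the same Fredholm-determinant-plus-implicit-function-theorem treatment of holomorphy. One correction at the crux: what must diverge is $\int_{\T^d}\Delta_\mu(K,p\,;\tau^b_{\mathrm{ess}})^{-1}\eta(dp)$, and the reason is \emph{not} the non-integrability of $\int_{\T^d}(E(K;p,q)-z)^{-1}\eta(dq)$ --- that is an integral over $q$, finite for every $z<\tau^b_{\mathrm{ess}}$, whose threshold divergence is what drives Theorem \ref{existencetwo}, not Theorem \ref{existencethree}. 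The correct reason is that $\Delta_\mu(K,\cdot\,;\tau^b_{\mathrm{ess}})$ vanishes \emph{quadratically} at the minimizer $p_\mu(K)$ of $Z_\mu(K,p)=e_\mu(K-p)+\varepsilon(p)$, because $Z_\mu$ has a quadratic minimum and $\partial_z\Delta_\mu\neq0$ there (the content of \eqref{Z} and \eqref{nondeger}), and $|p-p_\mu(K)|^{-2}$ fails to be integrable over $\T^d$ precisely for $d\le2$; you name the right mechanism but attribute it to the wrong integral, and the quadratic-vanishing estimate still has to be supplied. Where you genuinely depart from the paper is part (ii): the paper merely asserts the repulsive case is ``similar'', whereas your staggering unitary $\mathcal U=U^{\otimes3}$, which commutes with $\mathbb V$ and sends $\varepsilon(p)$ to $4d-\varepsilon(p)$, gives $\mathcal U H_\mu(K)\mathcal U^*=12d-H_{-\mu}(K+\vec\pi)$ and transports eigenvalues, essential spectra and all holomorphy statements in one stroke --- a cleaner and less error-prone reduction. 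Part (iii) via $\mathbb V\ge0$ and operator monotonicity is the paper's one-line minimax argument made explicit.
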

Theorem \ref{existencethree} yields a corollary, which asserts the
existence of a band spectrum for a system of two resp. three
interacting bosons on the lattice $\Z^d, d=1,2$.
\begin{corollary}\label{existencebound} For any $|\mu|>0$ the
two-particle resp.\ three-particle hamiltonian $\mathrm{h_\mu}$
resp. $\mathrm{H_\mu}$ has a band spectrum
\begin{equation*}[\min_{k} e_\mu(k),\max_{k} e_\mu(k)]\,\, \mbox{resp.}\,\,
[\min_{K} E_\mu(K),\max_{K} E_\mu(K)].
\end{equation*}
\end{corollary}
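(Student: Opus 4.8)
The corollary is a packaging of the two existence theorems through the fibered structure of the Hamiltonians, so the plan is to invoke the spectral theorem for direct integrals rather than to do any new analysis. Recall from \eqref{fiber2} and \eqref{fiber3} that $\mathrm{h}_\mu=\int_{\T^d}^{\oplus}\tilde h_\mu(k)\,\eta(dk)$ and $\mathrm{H}_\mu=\int_{\T^d}^{\oplus}\tilde H_\mu(K)\,\eta(dK)$, and that by Lemma \ref{unitary} the fibers $\tilde h_\mu(k)$ and $\tilde H_\mu(K)$ are unitarily equivalent to $h_\mu(k)$ and $H_\mu(K)$, so they share the same spectra. The standard result I would use (see \cite{RSIV}) is that for a measurable family of uniformly bounded self-adjoint operators, a point $\lambda$ belongs to the spectrum of the direct integral if and only if for every $\epsilon>0$ the set of quasi-momenta $k$ with $\sigma(h_\mu(k))\cap(\lambda-\epsilon,\lambda+\epsilon)\neq\emptyset$ has positive Haar measure; equivalently $\sigma(\mathrm{h}_\mu)=\overline{\bigcup_{k\in\T^d}\sigma(h_\mu(k))}$, and likewise for $\mathrm{H}_\mu$.

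First I would treat the two-particle case. By Theorem \ref{existencetwo}, for every $k\in\T^d$ the fiber $h_\mu(k)$ carries a single eigenvalue $e_\mu(k)$, and $k\mapsto e_\mu(k)$ is holomorphic, hence continuous, on $\T^d$. Since $\T^d$ is compact and connected and $e_\mu$ is real-valued, its range is exactly the closed interval $[\min_k e_\mu(k),\max_k e_\mu(k)]$ by the intermediate value theorem; here I note that the range of a continuous function on a compact set is already closed, so the closure in the direct-integral criterion is automatic. If $\lambda$ lies in this interval, pick $k_0$ with $e_\mu(k_0)=\lambda$; continuity forces a whole neighborhood of $k_0$, which has positive Haar measure, to map into $(\lambda-\epsilon,\lambda+\epsilon)$, so the criterion above places $\lambda$ in $\sigma(\mathrm{h}_\mu)$. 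Hence $[\min_k e_\mu(k),\max_k e_\mu(k)]\subset\sigma(\mathrm{h}_\mu)$ and is the band traced out by the dispersion relation $e_\mu$. The three-particle statement is obtained by repeating this argument verbatim with $K$, $H_\mu(K)$, $E_\mu$, and Theorem \ref{existencethree} in place of $k$, $h_\mu(k)$, $e_\mu$, and Theorem \ref{existencetwo}.

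The genuinely substantive input is already spent in Theorems \ref{existencetwo} and \ref{existencethree} (construction, uniqueness, and holomorphy of the eigenvalue branches), so the only things left to verify are routine: measurability of the families $k\mapsto h_\mu(k)$ and $K\mapsto H_\mu(K)$, which is immediate from the continuous dependence of $h_0(k)$ and $H_0(K)$ on the quasi-momentum and the fact that $v$ and $\mathbb{V}$ are fixed bounded operators. The one conceptual point I would flag is the meaning of the word ``band'': the strict fiberwise separations $e_\mu(k)<\cE_{\min}(k)$ for $\mu<0$ and $e_\mu(k)>\cE_{\max}(k)$ for $\mu>0$ (and their three-particle analogues relative to the bottom resp.\ top of $\sigma_{\mathrm{ess}}(H_\mu(K))$) guarantee that each point of the band arises from a true bound state rather than from the fiber continuum. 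I would caution, however, that these pointwise inequalities do not by themselves isolate the entire band from the total essential spectrum $\bigcup_k[\cE_{\min}(k),\cE_{\max}(k)]$, since an eigenvalue at one quasi-momentum may be embedded in the fiber continuum at another; the corollary as stated claims only that the interval $[\min_k e_\mu(k),\max_k e_\mu(k)]$ is the spectral band swept by the dispersion, which is exactly what the direct-integral argument delivers.
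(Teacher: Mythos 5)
Your argument is correct and is essentially the paper's own: the authors give no separate proof of Corollary \ref{existencebound}, presenting it as an immediate consequence of Theorems \ref{existencetwo} and \ref{existencethree} via the direct-integral decompositions \eqref{fiber2}--\eqref{fiber3}, which is exactly the reasoning you spell out (continuity of the eigenvalue branch on the compact connected torus plus the spectral theorem for direct integrals). Your closing caveat that the band need not be isolated from the essential spectrum also matches the paper, which reserves the isolation claim for large $|\mu|$ in Remark \ref{Rem_exist}.
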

\begin{remark}\label{Rem_exist}
We remark that for large $|\mu|>0$ the two-particle resp.\
three-particle hamiltonian $\mathrm{h_\mu}$ resp. $\mathrm{H_\mu}$
has an isolated  band
\begin{equation*}[\min_{k} e_\mu(k),\max_{k} e_\mu(k)]\,\, \mbox{resp.}\,\,
[\min_{K} E_\mu(K),\max_{K} E_\mu(K)].
\end{equation*}
\end{remark}

Let
\begin{equation*}\label{esstwo}
\sigma_{\mathrm{esstwo}}(H_\mu (K))=\cup _{k\in
{\T}^d}\{e_\mu(K-k)+\varepsilon (k)\}
\end{equation*}
be the two-particle part and
\begin{equation*}
\tau^{b}_{\mathrm{ess}}(H_{\mu}(K))=\inf
\sigma_{\mathrm{ess}}(H_\mu(K))\,\,\mbox{resp.}\,\,
\tau^{t}_{\mathrm{ess}}(H_{\mu}(K))=\sup \sigma_{\mathrm{ess}}(H_\mu
(K))
\end{equation*} be the bottom resp. top of the essential spectrum of the Schr\"{o}dinger operator $H_\mu(K)$.
\begin{remark}\label{Rem_exist} For any $\mu<0$ Theorems \ref{existencetwo} and \ref{ess}
yield that the relations
\begin{equation*}\label{d=1ord=2}
\sigma_{\mathrm{esstwo}}(H_\mu (K))\neq\emptyset
\end{equation*}
and hence
\begin{equation*}\label{d=1or d= 2}
\tau^{b}_{\mathrm{ess}}(H_{\mu}(K))<E_{\min}(K)
\end{equation*}
hold, which allows the existence of bound states of three
attractively interacting bosons on lattice $\Z^d$
\cite{ALzM04,LSN93}.
\end{remark}

\begin{remark}\label{Rem_exist_efim} We remark that for the three-particle Schr\"{o}dinger
operator $H_{\mu}(K),K\in\T^3$ associated to a system of three
bosons in the lattice $\Z^3$, there exists $\mu_0<0$ such that
\begin{equation*}\label{Efimov}
\tau^{b}_{\mathrm{ess}}(H_{\mu_0}(0))=E_{\min}(0)
\end{equation*}
and for all $\mu<\mu_0<0$
\begin{equation}\label{infiniteness}
\tau^{b}_{\mathrm{ess}}(H_{\mu}(0))< E_{\min}(0).
\end{equation}
At the same time for any  $0\neq K\in\T^3$ the following relations
\begin{equation*}\label{d=1ord=2}
\sigma_{\mathrm{esstwo}}(H_{\mu_0}(K))\neq\emptyset
\end{equation*}
and
\begin{equation}\label{finiteness}
\tau^{b}_{\mathrm{ess}}(H_{\mu}(K))< E_{\min}(K).
\end{equation}
hold. Thus, only the operator $H_{\mu_0}(0)$ has infinitely many
number of eigenvalues below the bottom of the essential spectrum
(the Efimov effect) \cite{ALzM04,LSN93} and this result yields the
existence of bound states of $H_{\mu_0}(K),K \in \mathbb{G}_0$,
where $\mathbb{G}_0 \subset \T^d$ is a neighborhood of the point
$0\in \T^d$. Moreover for any nonzero $K \in \T^3$ the operator
$H_{\mu_0}(K)$ has only finitely many number of bound states.
\end{remark}
\begin{remark}
Note that analogous remarks on the existence of bound states of
$H_\mu(K)$ for the case of repulsive $\mu>0$ are valid.
\end{remark}
\begin{remark}
The results for the attractive interactions are characteristic to
the Schr\"{o}dinger operators associated to a system of three
particles moving on the lattice $\Z^d$ and the Euclidean space
$\R^d$ in dimension $d=1,2$.
\end{remark}
\section{The essential spectrum of the operator $ H_\mu(K)$}
Let $\mu\ne 0$. Since the particles are identical there is only one
channel operator
 $ H_{\mu,ch} (K),$ \ $K \in \T^d,\,d=1,2$ defined in the
Hilbert space $L^{2}[({\T}^d )^2]$ as
\begin{equation*}
 H_{\mu,ch}(K)=H_0(K)+\mu V.
\end{equation*}
The operators $H_0(K)$ and $V=V_\alpha$ act as follows
\begin{equation*}\label{TotalK}
(H_0(K)f)(p,q)=E (K;p,q)f(p,q),\quad f\in L^{2}[({\T}^d )^2],
\end{equation*}
where
\begin{equation*}\label{Eps}
E(K;p,q)= \varepsilon (K-p-q)+\varepsilon(q) +\varepsilon(p)
\end{equation*}
and
 \begin{equation*}\label{Poten}(V
f)(p,q)= \int\limits_{\T^d}f(p,t)\eta(dt),\quad f\in
L^{2}[({\T}^d)^2].
\end{equation*}

The decomposition of the space $L^{2}[(\T^d)^2]$ into the direct
integral

 $$L^{2}[({\T}^d )^2]= \int\limits_{k\in \T^d}
\oplus L^{2}( \T^d) \eta(dp)
$$
yields for the operator $H_{\mu,ch}(K)$ the decomposition
 $$H_{\mu,ch}(K)=\int\limits_{k\in \T^d}
 \oplus h_{\mu}(K,p) \eta(dp).$$
The fiber operator $h_{\mu}(K,p)$ acts in the Hilbert space
$L^{2}(\T^d)$ and has the form
\begin{equation}\label{representation}
 h_{\mu}(K,p) ={h}_{\mu}
(K-p)+\varepsilon(p) I,
\end{equation} where $I_{L^{2}(\T^d)}$ is
the identity operator and the operator ${h}_{\mu}(p)$ is unitarily
equivalent to ${h}_{\mu}(p),$ defined by \eqref{two}. The
representation \eqref{representation} of the operator $h_{\mu}(K,p)$
and Theorem \ref{existencetwo} yield the following description for
the spectrum of $h_{\mu}(K,p)$
\begin{align}\label{stucture}
 &\sigma (h_{\mu}(K,p))
 =[e_\mu(K-p)+\varepsilon(p)]\cup [E_{\min }(K) ,\,E_{\min }(K)].
\end{align}
\begin{lemma}\label{inequality}  For any $K\in \T^d$ the bottom $\tau^{b}_{\mathrm{ess}}(H_{\mu}(K))$
resp. top $\tau^{t}_{\mathrm{ess}}(H_{\mu}(K))$ of the essential
spectrum  satisfies the inequality
$$\tau^{b}_{\mathrm{ess}}(H_{\mu}(K))<E_{\min} (K)$$ resp.
$$\tau^{t}_{\mathrm{ess}}(H_{\mu}(K))>E_{\max}(K).$$
\end{lemma}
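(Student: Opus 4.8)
The plan is to read the bottom of the essential spectrum off Theorem~\ref{ess} and to show that the two-particle branch already dips strictly below $E_{\min}(K)$. Writing $\sigma_{\mathrm{ess}}(H_\mu(K))$ as the union of the two-particle part $\sigma_{\mathrm{esstwo}}(H_\mu(K)) = \cup_{k \in \T^d}\{e_\mu(K-k) + \varepsilon(k)\}$ and the segment $[E_{\min}(K), E_{\max}(K)] = \sigma(H_0(K))$, and using that both sets are compact (a continuous image of the compact torus, respectively a closed interval), I obtain $\tau^b_{\mathrm{ess}}(H_\mu(K)) = \min\bigl(\min_{k}\{e_\mu(K-k)+\varepsilon(k)\},\, E_{\min}(K)\bigr)$. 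Thus it suffices to exhibit a single $k \in \T^d$ with $e_\mu(K-k) + \varepsilon(k) < E_{\min}(K)$.

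The key step is an exact identification of the minimum of the free two-particle branch. From the definition $\cE_{\min}(k) = \min_{q}[\varepsilon(k-q) + \varepsilon(q)]$ I have, for every $k$, $\cE_{\min}(K-k) + \varepsilon(k) = \min_{q}[\varepsilon(K-k-q) + \varepsilon(q) + \varepsilon(k)] = \min_q E(K;k,q)$, since $E(K;k,q) = \varepsilon(K-k-q) + \varepsilon(q) + \varepsilon(k)$. Minimizing over $k$ and collapsing the two successive minima into one joint minimum gives $\min_{k}[\cE_{\min}(K-k) + \varepsilon(k)] = \min_{k,q} E(K;k,q) = E_{\min}(K)$. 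In words, the lowest point of the free two-particle dispersion branch coincides exactly with the bottom of the three-particle free continuum.

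I then pick a minimizer $k_0$ of $k \mapsto \cE_{\min}(K-k)+\varepsilon(k)$, which exists by compactness of $\T^d$ and continuity of $\varepsilon$ and $\cE_{\min}$. For $\mu<0$, Theorem~\ref{existencetwo} supplies the strict gap $e_\mu(K-k_0) < \cE_{\min}(K-k_0)$, whence $e_\mu(K-k_0)+\varepsilon(k_0) < \cE_{\min}(K-k_0)+\varepsilon(k_0) = E_{\min}(K)$, and therefore $\tau^b_{\mathrm{ess}}(H_\mu(K)) < E_{\min}(K)$. The repulsive case $\mu>0$ is entirely symmetric: replacing every minimum by a maximum, the identity becomes $\max_{k}[\cE_{\max}(K-k)+\varepsilon(k)] = E_{\max}(K)$, and the strict inequality $e_\mu(k) > \cE_{\max}(k)$ from Theorem~\ref{existencetwo} pushes the top of the two-particle branch, hence $\tau^t_{\mathrm{ess}}(H_\mu(K))$, strictly above $E_{\max}(K)$. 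I expect no genuine obstacle: the only substantive ingredient is the strict separation of the two-particle eigenvalue from its essential spectrum, which is already granted by Theorem~\ref{existencetwo}; the analytic heart of the matter is merely the rearrangement of the double minimization above, which makes the free two-particle branch touch the three-particle continuum precisely at its bottom.
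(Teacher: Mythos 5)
Your proof is correct and follows essentially the same route as the paper: both arguments identify the infimum of the two-particle branch $k\mapsto e_\mu(K-k)+\varepsilon(k)$ via the identity $\min_k[\cE_{\min}(K-k)+\varepsilon(k)]=E_{\min}(K)$ (the paper evaluates this at a minimum point $(p_{\min}(K),p_{\min}(K))$ of $E(K;\cdot,\cdot)$ rather than rearranging the double minimum) and then invoke the strict gap $e_\mu(k)<\cE_{\min}(k)$ from Theorem~\ref{existencetwo}. Your explicit justification that $\tau^b_{\mathrm{ess}}$ is attained as a minimum over the compact union in Theorem~\ref{ess} is a slightly more careful packaging of what the paper takes for granted, but it is not a different method.
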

\begin{proof}
For any $\mu\ne0$ and $K\in \T^d$ we define $Z_\mu (K,p)$ on
$\T^d,d=1,2$ by
\begin{equation}\label{defZ}
 Z_\mu(K,p)=e_\mu (K-p)+\varepsilon (p).
\end{equation} Theorem \ref{existencetwo} yields the inequality
\begin{align*}
&Z_\mu(K,K-p_{\min}(K))= e_{\mu}(K-p_{\min}(K))+
\varepsilon(p_{\min}(K))\\
&<\cE_{\min}(K-p_{\min}(K))+\varepsilon(p_{\min}(K))=E_{\min}(K),
\end{align*}
where $(p_{\min}(K),p_{\min}(K))\in (\T^d)^2$ is a minimum point of
the function $E(K;p,q)$. The definition of
$\tau^{b}_{\mathrm{ess}}(H_{\mu}(K))$ gives
\begin{align*}
&\tau^{b}_{\mathrm{ess}}(H_{\mu}(K))=\inf_{p\in\T^d} Z_\mu(K,p) \leq
e_\mu(K-p_{\min}(K))+ \varepsilon(p_{\min}(K))<E_{\min}(K),
\end{align*} which proves Lemma \ref{inequality}.
For the case $\mu>0$ the proof of Lemma \ref{inequality} is
analogously.
\end{proof}
\section{Proof of the main results}
Set
\begin{align*}
&E_{\min}(K,k)=\min_{q}E(K,k\,;q)=\min_{q}\cE_{K-k}(q)+\varepsilon(k),\\
&E_{\max}(K,k)=\max_{q}E(K,k\,;q)=\max_{q}\cE_{K-k}(q)+\varepsilon(k).
\end{align*}

For any $\mu\in \R$ and $K,k\in \T^d,d=1,2$ the determinant
$\Delta_\mu (K,k\,;z)$ associated to the operator $h_{\mu}(K,k)$ can
be defined as real-analytic function in $\mathrm{C}\setminus
[E_{\min }(K,k),\,E_{\max }(K,k)]$ by
\begin{equation*}\label{determinant}
\Delta_\mu (K,k\,; z ) = 1+\mu \int\limits_{\T^d}\frac{\eta(dq)}{E
(K; k\,,q)-z}.
\end{equation*}
\begin{lemma}\label{nollar2}
For any $\mu\in \R$ and  $K,k\in\T^d$ the number  $z \in
{\mathrm{C}} {\setminus} [E_{\min }(K,k),\,E_{\max}(K,k)]$ is an
eigenvalue of the operator $h_{\mu}(K,k) $ if and only if
$$
 \Delta_\mu (K, k\,; z) = 0.
$$
 \end{lemma}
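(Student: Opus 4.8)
The plan is to exploit the fact that, by the representation \eqref{representation}, the fiber operator $h_{\mu}(K,k)$ acts on $L^{2}(\T^d)$ as the multiplication operator by the continuous function $E(K;k,q)$ perturbed by the rank-one term $\mu v$, where $(vf)(q)=\int_{\T^d}f(t)\eta(dt)=\langle f,\mathbf 1\rangle$ and $\mathbf 1$ denotes the constant function $\mathbf 1(q)\equiv 1$; thus $vf=\langle f,\mathbf 1\rangle\,\mathbf 1$, so $v$ is a multiple of the projection onto the constants. Since $E(K;k,\cdot)$ is continuous on the compact torus, its range is exactly $[E_{\min}(K,k),E_{\max}(K,k)]$, so for $z\in\C\setminus[E_{\min}(K,k),E_{\max}(K,k)]$ the function $E(K;k,\cdot)-z$ is bounded away from zero; hence $(E(K;k,\cdot)-z)^{-1}$ is a bounded multiplication operator, the kernel $q\mapsto (E(K;k,q)-z)^{-1}$ belongs to $L^{2}(\T^d)$, and $\Delta_\mu(K,k;z)$ is well defined.

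For the necessity, I would suppose $f\neq 0$ solves $h_{\mu}(K,k)f=zf$, i.e. $(E(K;k,\cdot)-z)f=-\mu\langle f,\mathbf 1\rangle\,\mathbf 1$, and set $c=\langle f,\mathbf 1\rangle$. The key dichotomy is to rule out $c=0$: if $c=0$ then $(E(K;k,\cdot)-z)f=0$ almost everywhere, and since $E(K;k,\cdot)-z$ never vanishes this forces $f=0$, a contradiction. Hence $c\neq 0$ and
\[
f(q)=\frac{-\mu c}{E(K;k,q)-z},
\]
which lies in $L^{2}(\T^d)$ by the first paragraph. Taking the inner product of the eigenvalue equation with $\mathbf 1$ and dividing by $c\neq 0$ then yields $1+\mu\int_{\T^d}\eta(dq)/(E(K;k,q)-z)=0$, that is, $\Delta_\mu(K,k;z)=0$.

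For the sufficiency I would argue in reverse: assuming $\Delta_\mu(K,k;z)=0$, define $f(q)=(E(K;k,q)-z)^{-1}$, a nonzero element of $L^{2}(\T^d)$. A direct computation gives $(E(K;k,\cdot)-z)f=\mathbf 1$ and $\mu v f=\mu\bigl(\int_{\T^d}\eta(dq)/(E(K;k,q)-z)\bigr)\mathbf 1=-\mathbf 1$, the last equality being precisely $\Delta_\mu(K,k;z)=0$; adding these shows $(h_{\mu}(K,k)-z)f=0$, so $z$ is an eigenvalue.

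Conceptually this is just the Weinstein--Aronszajn (Fredholm) determinant criterion for the rank-one perturbation $I+\mu(E(K;k,\cdot)-z)^{-1}v$, whose determinant equals $1+\mu\langle(E(K;k,\cdot)-z)^{-1}\mathbf 1,\mathbf 1\rangle=\Delta_\mu(K,k;z)$. I do not expect a serious obstacle here; the only points requiring care are the dichotomy $c=0$ versus $c\neq 0$, which is what forces any eigenvector to be proportional to $(E(K;k,\cdot)-z)^{-1}$, and the integrability of that resolvent kernel. Both hinge on the standing assumption $z\notin[E_{\min}(K,k),E_{\max}(K,k)]$, which keeps $E(K;k,\cdot)-z$ uniformly away from zero.
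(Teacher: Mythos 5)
Your proof is correct, and it is essentially the standard rank-one perturbation argument that the paper itself does not spell out but delegates to \cite{LKhL12} with the remark that it is ``simple''; the key points you isolate (the dichotomy $c=0$ versus $c\neq 0$, and the boundedness of $(E(K;k,\cdot)-z)^{-1}$ for $z$ off the range of $E(K;k,\cdot)$) are exactly what that reference's computation rests on. Nothing further is needed.
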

The proof of Lemma \ref{nollar2}  is simple and can be found in
\cite{LKhL12}.
\begin{remark}
We note that for each $\mu<0$ resp. $\mu>0$ and $K,k \in \T^d$ there
exist $z_1=z_1(K,k)<E_{\min }(K,k)$ resp.
$z_2=z_2(K,k)>E_{\max}(K,k)$ such that for all $z\leq z_1$ resp.
$z\geq z_2$ the function $\Delta_\mu (K,k\,;z)$ is non-negative and
the square root function $\Delta^{\frac{1}{2}}_\mu (K,k\,;z)$ is
well defined.
\end{remark}
We define for each $\mu \in\R$ and $z\in \R\setminus
\big[\tau^b_{\mathrm{ess}}(H_{\mu}(K)),\tau^t_{\mathrm{ess}}(H_{\mu}(K))\big]$
the self-adjoint compact operator $\mathrm{L}_\mu( K,z),\,K\in \T^d$
as
\begin{equation}\label{compact_operator}
[\mathrm{L}_\mu(K,z)\psi](p)=2\mu
 \int\limits_{\T^d} \frac{\Delta^{-\frac{1}{2}}_\mu(K,p,z)
\Delta^{-\frac{1}{2}}_\mu(K,q, z)}{E(K;p,q)-z}\psi(q)
\eta(dq),\\
\psi\in L^2(\T^d).
\end{equation}
Notice that for $\mu<0$ the operator $\mathrm{L}_\mu(K,z),\, z <
\tau^b_{\mathrm{ess}}(H_{\mu}(K))$ has been introduced in
\cite{LSN93} to investigate Efimov's effect for the three-particle
lattice Schr\"{o}dinger operator $H_\mu(K)$ associated to a system
of three bosons on the lattice $\Z^3$.

\begin{lemma}\label{eigenvalue}
Let  $\mu\ne0$ and
$z\in\R\setminus[\tau^b_{\mathrm{ess}}(H_{\mu}(K)),\tau^t_{\mathrm{ess}}(H_{\mu}(K)]$.

\item[(i)] If $f\in L^{2,s}[({\T}^d)^2]$ solves $H_\mu(K)f = zf$, then
$$\psi(p)=\Delta^{\frac12}_\mu(K,p\:;z)\int\limits_{\T^d}f(p,t)\eta(dq)\in L^2({\T}^d)$$ solves
$\mathrm{L_\mu}(K,z)\psi=-\psi$.

\item[(ii)] If $\psi \in L^2({\T}^d)$ solves $\mathrm{L_\mu}(K,z)\psi=-\psi$, then
\begin{equation*}
f(p,q)=\dfrac{-\mu
[\varphi(p)+\varphi(q)+\varphi(K-p-q)]}{E(K;p,q)-z}\in
L^{2,s}[({\T}^d)^2]
\end{equation*}
solves the equation $H_\mu(K)f = zf$, where
$\varphi(p)=\Delta^{-\frac12}_\mu(K,p\:;z)\psi(p)$.
\end{lemma}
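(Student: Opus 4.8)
The plan is to carry out a Birman--Schwinger-type reduction that turns the eigenvalue problem for $H_\mu(K)$ into the fixed-point equation $\mathrm{L}_\mu(K,z)\psi=-\psi$, exploiting both the symmetry of $f$ and of the kernel $E(K;p,q)$. Throughout I use that for $z\in\R\setminus[\tau^{b}_{\mathrm{ess}}(H_{\mu}(K)),\tau^{t}_{\mathrm{ess}}(H_{\mu}(K))]$ the difference $E(K;p,q)-z$ is bounded away from zero: by Lemma \ref{inequality} one has $E(K;p,q)\in[E_{\min}(K),E_{\max}(K)]\subset[\tau^{b}_{\mathrm{ess}},\tau^{t}_{\mathrm{ess}}]$, so division by $E(K;p,q)-z$ is legitimate. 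I also use that $\Delta_\mu(K,p;z)$ is strictly positive and bounded in $p$ on this range of $z$ (as recorded in the Remark following Lemma \ref{nollar2}), so that $\Delta^{\pm 1/2}_\mu(K,\cdot\,;z)$ are bounded multiplication operators.

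For part (i), I would start from $H_\mu(K)f=zf$, that is $(E(K;p,q)-z)f(p,q)=-\mu(\mathbb{V}f)(p,q)$. Setting $\varphi(p)=\int_{\T^d}f(p,t)\eta(dt)$, which lies in $L^2(\T^d)$ by Cauchy--Schwarz, and using that $f$ is symmetric, the right-hand side becomes $-\mu[\varphi(p)+\varphi(q)+\varphi(K-p-q)]$, whence
\[
f(p,q)=\frac{-\mu[\varphi(p)+\varphi(q)+\varphi(K-p-q)]}{E(K;p,q)-z}.
\]
Integrating this identity in $q$ and recalling $\mu\int_{\T^d}\frac{\eta(dq)}{E(K;p,q)-z}=\Delta_\mu(K,p;z)-1$, the $\varphi(p)$-term reproduces $\varphi(p)\big(1-\Delta_\mu(K,p;z)\big)$; the crucial observation is that $q\mapsto K-p-q$ leaves $E(K;p,q)$ invariant, so the $\varphi(q)$- and $\varphi(K-p-q)$-integrals coincide and combine into a single factor $2$. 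After cancelling $\varphi(p)$ this yields $\Delta_\mu(K,p;z)\varphi(p)=-2\mu\int_{\T^d}\frac{\varphi(q)}{E(K;p,q)-z}\eta(dq)$. Multiplying by $\Delta^{-1/2}_\mu(K,p;z)$ and putting $\psi(p)=\Delta^{1/2}_\mu(K,p;z)\varphi(p)$ symmetrizes the kernel and gives exactly $\psi=-\mathrm{L}_\mu(K,z)\psi$, with $\psi$ of the stated form.

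For part (ii), I would run the computation in reverse. Starting from $\psi$ with $\mathrm{L}_\mu(K,z)\psi=-\psi$, set $\varphi=\Delta^{-1/2}_\mu(K,\cdot\,;z)\psi\in L^2(\T^d)$ and define $f$ by the displayed formula; membership in $L^{2,s}[({\T}^d)^2]$ is immediate, since $E(K;p,q)-z$ is bounded away from $0$ while both $E(K;p,q)$ and the bracket are symmetric in $(p,q)$. It then remains to verify $H_\mu(K)f=zf$, which by construction is equivalent to the self-consistency relation $\int_{\T^d}f(p,t)\eta(dt)=\varphi(p)$. Computing this integral exactly as in (i) produces $\varphi(p)\big(1-\Delta_\mu(K,p;z)\big)-2\mu\int_{\T^d}\frac{\varphi(q)}{E(K;p,q)-z}\eta(dq)$; rewriting $\mathrm{L}_\mu(K,z)\psi=-\psi$ as $2\mu\int_{\T^d}\frac{\varphi(q)}{E(K;p,q)-z}\eta(dq)=-\Delta_\mu(K,p;z)\varphi(p)$ and substituting collapses the expression to $\varphi(p)$, closing the argument.

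The genuinely load-bearing step in both directions is the symmetry reduction $E(K;p,q)=E(K;p,K-p-q)$, which merges the two off-diagonal integrals and accounts for the coefficient $2\mu$ in the definition of $\mathrm{L}_\mu(K,z)$; everything else is bookkeeping with the identity $\mu\int\frac{\eta(dq)}{E-z}=\Delta_\mu-1$ and with the substitution $\psi=\Delta^{1/2}_\mu\varphi$. The only points demanding care are the well-definedness and boundedness of $\Delta^{\pm1/2}_\mu(K,\cdot\,;z)$ off the essential spectrum and the $L^2$-membership claims, and both follow from the positivity and boundedness of $\Delta_\mu$ together with $E(K;p,q)-z$ staying bounded away from zero.
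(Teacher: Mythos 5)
Your proof is correct and follows essentially the same route as the paper: reduce $H_\mu(K)f=zf$ to the equation for $\varphi(p)=\int_{\T^d}f(p,t)\eta(dt)$, merge the two off-diagonal terms via the invariance of $E(K;p,q)$ under $q\mapsto K-p-q$, and symmetrize with $\psi=\Delta^{1/2}_\mu\varphi$; part (ii) is the same computation reversed. If anything, you spell out the converse direction and the positivity of $\Delta_\mu$ off $[\tau^b_{\mathrm{ess}},\tau^t_{\mathrm{ess}}]$ more explicitly than the paper does.
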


\begin{proof}
\item[(i)]
 We prove Lemma \ref{eigenvalue} for the case $\mu<0$ and $z<\tau^{b}_{\mathrm{ess}}(H_{\mu}(K))$. The case $\mu>0$ and
$z>\tau^t_{\mathrm{ess}}(H_{\mu}(K))$ can be proven analogously.
\item[(i)]Let $\mu<0$. Assume that for some $K\in\T^d$ and $z<\tau^{b}_{\mathrm{ess}}(H_{\mu}(K))$
the Schr\"{o}dinger equation $$(H_{\mu}(K)f)(p,q)=z f(p,q),$$ i.e.,
the equation
\begin{align}\label{t1}
&[E(K;p,q)-z]f(p,q)\\
&=-\mu[\int\limits_{\T^d}f(p,t)\eta(dt)+\int\limits_{\T^d}f(t,q)\eta(dt)+\int\limits_{\T^d}f(K-p-q,t)\eta(dt)]\nonumber
\end{align}
has solution $f\in L^{2,s}[({\T}^d)^2]$. Denoting by
$\varphi(p)=\int\limits_{\T^d}f(p,t)\eta(dt) \in L^2({\T}^d)$ we
rewrite equation \eqref{t1} as follows
\begin{equation}\label{solution}
f(p,q)=-\mu\frac{\varphi(p)+\varphi(q)+\varphi(K-p-q)}{E(K;p,q)-z},
\end{equation}
which gives for $\varphi \in L^2({\T}^d)$ the equation
\begin{equation}\label{BSequation}
\varphi(p)=-\mu\int\limits_{\T^d}
\dfrac{\varphi(p)+\varphi(t)+\varphi(K-p-t)}{E(K;p,t)-z}\eta(dt).
\end{equation}
Since the function $E(K;p,t)$ is invariant under the changing
$K-p-t\rightarrow t$ of variables we have
\begin{align}
\varphi(p)\left[1+\mu\int\limits_{\T^d}
\dfrac{\eta(dq)}{E(K;p,q)-z}\right]=-2
\mu\int\limits_{\T^d}\dfrac{\varphi(q)}{E(K;p,q)-z}\eta(dq).\notag
\end{align}
Denoting by $\psi(p)=\Delta^{\frac12}_\mu(K,p\:;z)\varphi(p)$ and
taking into account that
$\Delta_\mu(K,p\:;z)\neq0,\,z<\tau^{b}_{\mathrm{ess}}(H_{\mu}(K))$
we get the equation
\begin{equation}\label{B-S}
\mathrm{L_\mu}(K,z)\psi=-\psi
\end{equation}\label{function}

\item[(ii)] Assume that $\psi$ is a solution of equation
\eqref{B-S}.Then the function
\begin{equation}\label{function}
\varphi(p)=\Delta^{-\frac12}_\mu (K,p\,;z)\psi(p)
\end{equation}
is a solution of equation \eqref{BSequation} and hence the function
defined by \eqref{solution} is a solution of the equation $H_\mu(K)f
= zf$, i.e., is an eigenfunction of the operator $H_{\mu}(K)$
associated to the eigenvalue
$z<\tau^{b}_{\mathrm{ess}}(H_{\mu}(K)).$
\end{proof}
{\bf Proof of Theorem \ref{ess}.} Theorem \ref{ess} can be proven,
applying equality \eqref{stucture},
 by the same way as Theorem 3.2
in \cite{ALKh12,ALzM04}. $\square$

{\bf Proof of Theorem \ref{existencethree}.}
$(i)$

 Let $\mu<0$ and $\mathrm{L_\mu}(K,z),K\in \T^d, d=1,2$ be the
operator defined in \eqref{compact_operator}. Then for any non-zero
$f\in L^2(\T^d)$ and $z<\tau^{b}_{\mathrm{ess}}(H_{\mu}(K))$ the
following relations
\begin{align}\label{norm}
&(\mathrm{L_\mu}(K,z)f,f)=-2\mu\int\limits_{\T^d}
\int\limits_{\T^d}\frac{f(p)\overline{f(q)}\eta(dp)\eta(dq)}
{\Delta^{\frac{1}{2}}_\mu(K,p,z)
\Delta^{\frac{1}{2}}_\mu(K,q,z)(E(K;p,q)-z)}\\ \nonumber
&>\frac{-2\mu}{E_{\max}-z} \Big |\int\limits_{\T^d}
\frac{f(p)\eta(dp)} {\Delta^{\frac{1}{2}}_\mu(K,p,z) }\Big |^2
\geq0\nonumber
\end{align}
hold. Let
\begin{equation*}F_z(f)=\int\limits_{\T^d} \frac{ f(p)\eta(dp) }
{\Delta^{\frac{1}{2}}_\mu(K,p,z)},\,z<\tau^{b}_{\mathrm{ess}}(H_{\mu}(K))
\end{equation*}
be bounded linear functional defined on $L^2(\T^d)$. According the
Riesz theorem
$$||F_z||=[\int\limits_{\T^d} \frac{\eta(dp)}
{\Delta_\mu(K,p,z)}]^{\frac{1}{2}}.$$ Let $\mathfrak{M}_+\subset
L^2(\T^d)$ subset of all non-negative functions. Then
\begin{equation}
||F_z||=\sup_{f\in L^2(\T^d) }\frac{|F_z|}{|f|}\geq \sup_{f\in
\mathfrak{M}_+}\frac{|F_z|}{|f|}\geq\frac{|F_z{\psi}_z|}{|{\psi}_z|}=||{\psi}_z||,
\end{equation}
where ${\psi}_z=[\Delta^{\frac{1}{2}}_\mu(K,p,z)]^{-1}$. Hence we
have that $||F_z||=||{\psi}_z||$.

Since for any $p\in \T^d$ the function $\Delta_\mu(K,p,z)$ is
monotone decreasing in\\
$z\in(-\infty,\tau^{b}_{\mathrm{ess}}(H_{\mu}(K)))$ there exists
a.e. the point-wise limit
$$\lim_{z \to
\tau^{b}_{\mathrm{ess}}(H_{\mu}(K))}\frac{1}{\Delta_\mu(K,p\,;z)}
=\frac{1}{\Delta_\mu(K,p\,;\tau^{b}_{\mathrm{ess}}(H_{\mu}(K)))}.$$
The Fatou's theorem yields the inequality
$$\int\limits_{\T^d} \frac{\eta(dp)
}{\Delta_\mu(K,p\,;\tau^{b}_{\mathrm{ess}}(H_{\mu}(K)))}\leq
\liminf_{z\to\tau^{b}_{\mathrm{ess}} (H_{\mu}(K))}\int\limits_{\T^d}
\frac{\eta(dp) }{\Delta_\mu(K,p\,;z)}.$$ 

Let $p_\mu(K)\in \T^d,\,K \in \T^d$ be a minimum point of the
function $Z_\mu(K,p),\,K \in \T^d$ defined in \eqref{defZ}. Then
$Z_\mu(K,p)$ has the following asymptotics
\begin{equation}\label{Z}
Z_\mu(K,p)=\tau^{b}_{\mathrm{ess}}(H_{\mu}(K))+
(B(K)(p-p_\mu(K)),p-p_\mu(K)) +o(|p-p_\mu(K)|^2),
\end{equation}
as $|p-p_\mu(K)| \to 0$, where $B(K)$ is non-negative matrix. For
any $K,p \in \T^d$ there exists a $\gamma=\gamma(K,p)>0$
neighborhood $W_\gamma(Z_\mu(K,p))$ of the point $Z_\mu(K,p)\in
\mathrm{C}$ such that for all $z \in W_\gamma(Z_\mu(K,p))$ the
following equality holds
\begin{equation}\label{expansion}
\Delta_\mu (K,p,z)=\sum_{n=1}^{\infty}C_n(\mu,
K,p)[z-Z_\mu(K,p)]^n,\\
\end{equation}
where
\begin{align*}
&C_1(\mu,K,p)=-\mu \int\limits_{\T^d}
\dfrac{\eta(dq)}{[E(K;p,q)-Z_\mu(K,p)]^2}>
0.\\
\end{align*}

According to \eqref{expansion} for any $K\in \T^d$ there is
$U_{\delta(K)}(p_{\mu}(K))$ such that for all $p\in
U_{\delta(K)}(p_{\mu}(K))$ the equality
\begin{align}\label{nondeger}
&\Delta_\mu(K,p,\tau^{b}_{\mathrm{ess}}(H_{\mu}(K)))\\ \nonumber
&=(Z_\mu(K,p)-\tau^{b}_{\mathrm{ess}}(H_{\mu}(K)))\hat
\Delta_\mu(K,p,\tau^{b}_{\mathrm{ess}}(H_{\mu}(K)))
 \end{align}
holds. Putting \eqref{Z} into \eqref{nondeger} yields the estimate
\begin{equation*}\label{otsenka2}
\Delta_\mu (K,p,\tau^{b}_{\mathrm{ess}}(H_{\mu}(K)))\leq
M(K)|p-p_\mu(K)|^2.
\end{equation*}

Hence, we have
$$\int\limits_{\T^d} \frac{\eta(dp)
}{\Delta_\mu(K,p\,;\tau^{b}_{\mathrm{ess}}(H_{\mu}(K)))}=+\infty.$$
Consequently, for any  $P>0$ there exists $z_0 <
\tau^{b}_{\mathrm{ess}}(H_{\mu}(K)) $ such that the inequality
\begin{equation}\label{norma}
||F_{z_0}||=\supp_{||\psi||=1}(F_{z_0}\psi,\psi)=[\int\limits_{\T^d}
\frac{\eta(dp)} {\Delta_\mu(K,p,z)}]^{\frac{1}{2}}>P
\end{equation} holds. Since
for all $z\leq \tau^{b}_{\mathrm{ess}}(H_{\mu}(K))$ the positive
function $(E_{\max}-z)^{-1}$ is bounded above, the inequalities
\eqref{norm} and \eqref{norma} yield the existence $\psi \in
L^2(\T^d),\,||\psi||_{L^2(\T^d)}=1$ such that the relation
$(\mathrm{L}_{\mu}(K,z_0)\psi,\psi)>1$ holds. At the same time
$(\mathrm{L}_{\mu}(K,z)\psi,\psi)$ is continuous in $z\in
(-\infty,\tau^{b}_{\mathrm{ess}}(H_{\mu}(K))$ and
$$(\mathrm{L}_{\mu}(K,z)\psi,\psi)\rightarrow 0\,\, \mbox{as}\,\, z\rightarrow
-\infty.$$ Thus, there exists real number
$E_\mu(K)$,\,$-\infty<E_\mu(K)<z_0<\tau^{b}_{\mathrm{ess}}(H_{\mu}(K))$
that satisfies the equality
$$(\mathrm{L}_{\mu}(K,E_\mu(K))\psi,\psi)=1$$ and
hence the Hilbert-Schmidt theorem implies that the equation
\begin{equation*}\label{ holomorphic_solution}
\mathrm{L}_{\mu}(K,E_\mu(K))\ \psi=\psi
\end{equation*}
has a solution $\psi \in L^2(\T^d),\,||\psi||=1$.

Lemma \ref{eigenvalue} yields that
$E_\mu(K)<\tau^{b}_{\mathrm{ess}}(H_{\mu}(K))$ is an eigenvalue of
the operator $H_{\mu}(K)$ and the associated eigenfunction
$f_{E_\mu(K)}(K;p,q)$ takes the form
\begin{equation}\label{eigenfunction}
f_{E_\mu(K)}(K;p,q)=\dfrac{-\mu
c(K)[\varphi(p)+\varphi(q)+\varphi(K-p-q)]}{E(K;p,q)-E_\mu(K)}\in
L^{2,s}[({\T}^d)^2]
\end{equation}
with $c(K)= ||f_{E_\mu(K)}(K;p,q)||^{-1},\,K \in \T^d$ being the
normalizing constant.

Since for any $K\in\T^d$ the functions $\Delta_\mu(K,p\,;E_\mu(K))$
and $E(K;p,q)-E_\mu(K)>0$ are holomorphic in $p,q \in \T^d$ the
solution $\psi$ of equation \eqref{B-S} and the function $\varphi$
given in \eqref{function} are holomorphic in $p\in \T^d$. Hence, the
eigenfunction \eqref{eigenfunction} of the operator $H_{\mu}(K)$
associated to the eigenvalue
$E_\mu(K)<\tau^{b}_{\mathrm{ess}}(H_{\mu}(K))$ is also holomorphic
in $p,q\in \T^d$.

For any $z<\tau^{b}_{\mathrm{ess}}(H_{\mu}(K))$ the kernel function
$$\mathrm{L}_{\mu}(K,z;p,q)=-2\mu \frac{\Delta^{-\frac{1}{2}}_\mu(K,p,z)
\Delta^{-\frac{1}{2}}_\mu(K,q, z)}{E(K;p,q)-z}$$ of the compact
self-adjoint operator $\mathrm{L}_{\mu}(K,z)$ is holomorphic in $p,q
\in \T^d$. The Fredholm determinant
$D_\mu(K,z)=\det[I-\mathrm{L}_{\mu}(K,z)]$ associated to the kernel
function is real-analytic  function in $z\in
(-\infty,\tau^{b}_{\mathrm{ess}}(H_{\mu}(K)))$. Lemma
\ref{eigenvalue} and the Fredholm theorem yield that each eigenvalue
of the operator $H_{\mu}(K)$ is a zero of the determinant
$D_\mu(K,z)$ and vice versa. Consequently, the compactness of the
torus $\T^d$ and the implicit function theorem give that the
eigenvalue $E_\mu(K)$ of $H_{\mu}(K)$ is a holomorphic function in
$K \in \T^d,\,d=1,2$.

Since for any $p,q\in\T^d$ the functions
$\Delta_\mu(K,p\:;E_\mu(K))$ and $E(K;p,q)-E_\mu(K)$ are holomorphic
in $K\in \T^d$ the solution $\psi$ of \eqref{B-S} and the function
$\varphi$ defined by \eqref{function} are holomorphic in $K\in
\T^d$. Hence, the eigenfunction \eqref{eigenfunction} of the
operator $H_{\mu}(K)$ associated to the eigenvalue
$E_\mu(K)<\tau^{b}_{\mathrm{ess}}(H_{\mu}(K))$ is also holomorphic
in $K\in \T^d$. Consequently, the vector valued mapping
\begin{equation*}\label{map}
f_{\mu}:\mathbb{T}^d \rightarrow
L^2[\mathbb{T}^d,\eta(dK);L^{2,s}[({\T}^d)^2]],\,K\rightarrow
f_{\mu,K}(\cdot,\cdot)
\end{equation*} is holomorphic in $\mathbb{T}^d$.

$(ii)$ This part of Theorem \ref{existencethree} can be proven
similarly using the corresponding Lemmas.

$(iii)$ An application  the minimax principle to the operator
$H_\mu(K)$ completes the proof of Theorem \ref{existencethree}.

{\bf Acknowledgement.} The authors thank I.A.Ikromov  for useful
discussions and remarks.The first author would like to thank
Fulbright program for supporting the research project during
2013-2014 academic year. The work was supported by the Grant
F4-FA-F079 of Fundamental Science Foundation of Uzbekistan.

\end{document}